\begin{document}
\title
[Extending vector bundles on Curves]
{Extending vector bundles on Curves}

\author{Siddharth Mathur}
\address{Mathematisches Institut\\Heinrich-Heine-Universit\"at\\40204 D\"usseldorf, Germany.}
\urladdr{https://sites.google.com/view/sidmathur/home}

\begin{abstract}

\noindent Given a curve in a (smooth) projective variety $C \subset X$ over an algebraically closed field $k$, we show that a vector bundle $V$ on $C$ can be extended to a ($\mu$-stable) vector bundle on $X$ if $\text{rank}(V) \geq \text{dim}(X)$ and $\text{det}(V)$ extends to $X$. 
\end{abstract}

	\maketitle

\section{Introduction}
Understanding which vector bundles on a subvariety extend to an ambient variety is a well studied problem in algebraic geometry. A famous example is the Grothendieck-Lefschetz theorems which considers the case of a complete intersection in a projective variety. A particularly striking consequence of this work is that complete intersections $X \subset \mathbf{P}^n$ with dimension $\text{dim}(X) \geq 3$ always have a Picard group which is freely generated by $\mathcal{O}_{\mathbf{P}^n}(1)|_X$. However, there are many counterexamples to this statement when the hypothesis on the dimension is dropped: indeed, all elliptic curves can be realized as ample divisors in $\mathbf{P}^2$ but they have a non-finitely generated Picard group. The purpose of this paper is to show that this is the only obstruction when the subvariety is a curve and the rank of the vector bundle is sufficiently large.

\begin{Theorem}[Main Theorem] \label{extendcurves} Let $(X, \mathcal{O}_X(1))$ be a projective scheme over an algebraically closed field $k$, $C \subset X$ a $1$-dimensional closed subscheme and $V$ a vector bundle on $C$ with $\mathrm{rank}(V) \geq \mathrm{dim}(X)$. Then $V$ extends to $X$ if and only if $\text{det}(V)$ extends to $X$. If $X$ is assumed to be integral, smooth and $\text{det}(V)$ extends to $X$, then $V$ may be extended to a $\mu$-stable vector bundle on $X$.
\end{Theorem}

\noindent The proof involves two main ingredients:
\begin{enumerate} 
\item Classical avoidance lemmas, Bertini-type arguments, and
\item the theory of elementary transformations due to Maruyama (see \cite{maruyama1982elementary} for a gentle introduction to the technique).
\end{enumerate}

\begin{Example} The main theorem has some surprising consequences. For instance, if $C \subset X$ is as in the statement of the theorem and $L$ is \emph{any} line bundle on $C$, the vector bundle $V=(L \oplus L^{\vee})^{\oplus n}$ extends to a vector bundle on $X$ for sufficiently large $n$. Similarly, if $V$ is \emph{any} vector bundle on $C$ with sufficiently large rank, then $V \otimes V^{\vee}$ and $V \oplus V^{\vee}$ both extend to vector bundles on $X$. In general, the main theorem shows that (smooth) projective schemes have plenty of ($\mu$-stable) vector bundles. \end{Example}

\textbf{Acknowledgments} I would like to thank Lucas Braune, Yajnaseni Dutta, Jack Hall, Andrew Kresch and David Stapleton for helpful comments. I would also like to thank the referee for many useful comments and suggestions. This research was conducted in the framework of the research training group GRK 2240: Algebro-geometric Methods in Algebra, Arithmetic and Topology, which is funded by the Deutsche Forschungsgemeinschaft.

\section{Some Lemmas}

\begin{center} \emph{Let $k$ denote a fixed algebraically closed field.} \end{center}

The first step of the proof of Theorem \ref{extendcurves} boils down to a simple observation: the dual of any globally generated vector bundle $V$ on a curve $C$ can be realized as an elementary transformation of the trivial bundle along a Cartier divisor $D \subset C$. This is the content of the following:

\begin{lemma} \label{trivial} Let $C$ be a $1$-dimensional scheme which is proper over $k$ and suppose $Z \subset C$ is a finite set containing the associated points of $C$. Moreover, let $V$ denote a rank $r$ vector bundle on $C$ which is globally generated, then there exists an effective Cartier divisor $D$ (which can be viewed as a closed subscheme $i: D \to C$ in a natural way) which doesn't meet $Z$, as well as an exact sequence
\[0 \to V^{\vee} \to \mathcal{O}^{\oplus r}_{C} \to i_*\mathcal{O}_D \to 0.\]
\end{lemma}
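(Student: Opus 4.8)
The plan is to produce the desired sequence not for $V^{\vee}$ directly, but to first exhibit $V$ itself as an elementary modification of the trivial bundle and then dualize. Since $V$ is globally generated of rank $r$, I would choose $r$ global sections $s_1,\dots,s_r \in H^0(C,V)$ and assemble them into a map $\phi\colon \mathcal{O}_C^{\oplus r}\to V$. For a general such choice I expect $\phi$ to be injective with torsion cokernel $Q$ supported on finitely many points, and the zero scheme of the induced determinant section $s_1\wedge\cdots\wedge s_r\in H^0(C,\det V)$ to be the natural candidate for $D$. Dualizing the resulting short exact sequence $0\to \mathcal{O}_C^{\oplus r}\xrightarrow{\phi} V\to Q\to 0$ should then yield the sequence in the statement.

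The technical heart, and the step I expect to be the main obstacle, is arranging the general position of the sections so that $Q$ is exactly $i_*\mathcal{O}_D$ for an \emph{effective Cartier} divisor $D$ disjoint from $Z$. Global generation means $H^0(C,V)\to V\otimes\kappa(p)$ is surjective for every $p$; since $Z$ is finite and contains every associated point of $C$, a general choice of $s_1,\dots,s_r$ restricts to a basis of $V\otimes\kappa(p)$ at every associated point and every generic point, so by Nakayama $\phi$ is an isomorphism there. In particular $\ker\phi$ vanishes at every associated point of $\mathcal{O}_C^{\oplus r}$ and hence is $0$, while $Q$ is torsion with support disjoint from $Z$. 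To control the \emph{structure} of $Q$, I would invoke a dimension count: the locus of $r$-tuples of vectors spanning a subspace of corank $\ge 2$ in the $r$-dimensional fibre has codimension $\ge 4$, so over the $1$-dimensional $C$ a general choice of sections makes $\phi$ have corank $\le 1$ at \emph{every} point. At a point where the corank is exactly $1$ the cokernel is cyclic, and its zeroth Fitting ideal is generated by the local equation $\det(s_1,\dots,s_r)$, which is a nonzerodivisor because the corank is $0$ at the generic points; hence the zero scheme $D$ of $s_1\wedge\cdots\wedge s_r$ is a $0$-dimensional effective Cartier divisor disjoint from $Z$, and $Q\cong i_*\mathcal{O}_D$.

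With the sequence $0\to \mathcal{O}_C^{\oplus r}\to V\to i_*\mathcal{O}_D\to 0$ in hand, I would finish by applying $\mathcal{H}om(-,\mathcal{O}_C)$ and passing to the long exact sequence of $\mathcal{E}xt$ sheaves. Since $V$ is locally free, $\mathcal{E}xt^1(V,\mathcal{O}_C)=0$ and $V^{\vee}=\mathcal{H}om(V,\mathcal{O}_C)$; since $D$ is Cartier and avoids the associated points of $C$, one checks $\mathcal{H}om(i_*\mathcal{O}_D,\mathcal{O}_C)=0$ (a homomorphism must send $1$ to an element annihilated by a nonzerodivisor). The long exact sequence therefore collapses to
\[0\to V^{\vee}\to \mathcal{O}_C^{\oplus r}\to \mathcal{E}xt^1(i_*\mathcal{O}_D,\mathcal{O}_C)\to 0.\]
Finally I would identify $\mathcal{E}xt^1(i_*\mathcal{O}_D,\mathcal{O}_C)\cong \mathcal{O}_C(D)/\mathcal{O}_C\cong i_*\mathcal{O}_D(D)$ from the sequence $0\to\mathcal{O}_C(-D)\to\mathcal{O}_C\to i_*\mathcal{O}_D\to 0$; because $D$ is a finite disjoint union of spectra of Artinian local rings, its Picard group is trivial, so $\mathcal{O}_D(D)\cong\mathcal{O}_D$ and the last term is $i_*\mathcal{O}_D$, giving exactly the asserted sequence. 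The only genuinely delicate point is the corank bound in the second paragraph, which is precisely where the curve hypothesis $\dim C=1$ (against the codimension-$4$ degeneracy locus) is used.
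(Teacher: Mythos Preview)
Your proposal is correct and follows essentially the same route as the paper's proof: first produce a short exact sequence $0\to\mathcal{O}_C^{\oplus r}\to V\to i_*\mathcal{O}_D\to 0$ with $D$ an effective Cartier divisor avoiding $Z$, then dualize and identify $\mathcal{E}xt^1(i_*\mathcal{O}_D,\mathcal{O}_C)\cong i_*\mathcal{O}_D$ exactly as you do. The only difference is that the paper outsources the first step to a cited lemma (which returns the cokernel as $i_*L$ for a line bundle $L$ on $D$, trivial since $D$ is finite), whereas you supply the standard general-position/degeneracy-locus argument directly; your codimension-$4$ count and the ensuing Fitting-ideal identification of the cokernel are precisely the content of that cited result.
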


\begin{proof} By \cite[Lemma 27]{Experiments} there is a Cartier divisor $i: D \subset C$ not meeting $Z$, a line bundle $L$ on $D$ and an exact sequence
\[0 \to \mathcal{O}^{\oplus r}_{C} \to V \to i_*L \to 0.\]
Since $D$ is a finite scheme, $L$ is trivial and so we have
\[0 \to \mathcal{O}^{\oplus r}_{C} \to V \to i_*\mathcal{O}_D \to 0.\]
Dualizing this sequence we obtain:
\[0 \to V^{\vee} \to \mathcal{O}^{\oplus r}_C \to \underline{\text{Ext}}^1(i_*\mathcal{O}_D, \mathcal{O}_C) \to 0.\]
Indeed, the first term $\underline{\text{Hom}}(i_*\mathcal{O}_D,\mathcal{O}_C)$ vanishes because $D$ contains no associated points of $C$ and the term $\underline{\text{Ext}}^1(V, \mathcal{O}_C)$ vanishes since $V$ is locally free. To determine $\underline{\text{Ext}}^1(i_*\mathcal{O}_D, \mathcal{O}_C)$ one can dualize the exact sequence
\[0 \to \mathcal{O}_C(-D) \to \mathcal{O}_C \to i_*\mathcal{O}_D \to 0\]
to see that 
\[\underline{\text{Ext}}^1(i_*\mathcal{O}_D, \mathcal{O}_C) \simeq \text{coker}(s_D:\mathcal{O}_C \to \mathcal{O}_Y(D)) \simeq i_*\mathcal{O}_D \otimes \mathcal{O}_C(D)\]
where $s_D$ is the section corresponding to $D$. Since $D$ is a finite scheme this is just isomorphic to $i_*\mathcal{O}_D$.
\end{proof}

The following lemma is a refinement of an argument in \cite[Theorem 5.2.5]{huybrechts2010geometry} which, itself, is adapted from the appendix of \cite{maruyama1978moduli}. It will allow us to guarantee our construction produces a $\mu$-stable vector bundle.

\begin{lemma} \label{stable} Fix an integral, smooth, projective $k$-scheme $(X, \mathcal{O}_X(1))$ of dimension $n$ and an effective Cartier divisor $H \subset X$ which is irreducible and generically smooth. Let $E$ be a vector bundle of rank $r \geq 2$ on $X$ which fits into an exact sequence
\[0 \to E \to \mathcal{O}_X^{\oplus r} \to \mathcal{O}_H(N) \to 0\]
for some integer $N>0$. Moreover, suppose the surjection  $\mathcal{O}_X^{\oplus r} \to \mathcal{O}_H(N)$ does not factor through any torsion-free sheaf $F$ with $\mu(F) \leq \frac{r-1}{r}\mathrm{deg}(\mathcal{O}_X(H))$ and $\mathrm{rk}(F)<r$. Then $E$ is a $\mu$-stable sheaf on $X$. \end{lemma}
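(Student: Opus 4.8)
The plan is to verify $\mu$-stability directly against the definition, reducing to subsheaves and extracting from the hypothesis the single strict inequality that we cannot obtain for free. First I would record the numerics: since $\mathcal{O}_H(N)$ is supported on the divisor $H$ it has rank $0$ and first Chern class $[H]$, so the defining sequence gives $\mathrm{rk}(E)=r$, $c_1(E)=-[H]$, and hence $\mu(E)=-\tfrac{1}{r}\deg(\mathcal{O}_X(H))$. To prove $\mu$-stability it suffices to show $\mu(E')<\mu(E)$ for every subsheaf $E'\subset E$ that is saturated in $E$ and has rank $s$ with $0<s<r$, so I would argue by contradiction, assuming some such $E'$ satisfies $\mu(E')\ge\mu(E)$.

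The crux, and the step I expect to be the main obstacle, is to upgrade $E'$ from being merely \emph{saturated in $E$} to being \emph{saturated in $\mathcal{O}_X^{\oplus r}$}, since only then is the quotient $\mathcal{O}_X^{\oplus r}/E'$ torsion-free and thus eligible to contradict the hypothesis. Let $A$ denote the saturation of $E'$ inside $\mathcal{O}_X^{\oplus r}$. Because $E'$ is saturated in $E$ one checks that $A\cap E=E'$, whence $A/E'$ embeds into $\mathcal{O}_X^{\oplus r}/E=\mathcal{O}_H(N)$. Here I would use that $H$, being an irreducible, generically smooth Cartier divisor in the smooth variety $X$, is integral, so that $\mathcal{O}_H(N)$ is a pure sheaf of dimension $n-1$; consequently any nonzero subsheaf of it is generically of rank one on $H$ and therefore has degree exactly $\deg(\mathcal{O}_X(H))$. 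Thus if $A\ne E'$ we would obtain $\deg(A)=\deg(E')+\deg(\mathcal{O}_X(H))$, which together with $\mu(E')\ge\mu(E)$ forces $\mu(A)>0$; but $A\subset\mathcal{O}_X^{\oplus r}$ and the trivial bundle is $\mu$-semistable of slope $0$, so $\mu(A)\le 0$, a contradiction. Hence $A=E'$, i.e. $E'$ is already saturated in $\mathcal{O}_X^{\oplus r}$.

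With this in hand, set $P:=\mathcal{O}_X^{\oplus r}/E'$, a torsion-free sheaf of rank $r-s$ with $0<r-s<r$. Since $E'\subset E=\ker(\mathcal{O}_X^{\oplus r}\to\mathcal{O}_H(N))$, the surjection factors as $\mathcal{O}_X^{\oplus r}\twoheadrightarrow P\to\mathcal{O}_H(N)$. Finally I would bound the slope: from $\deg(P)=-\deg(E')$ and $\mu(E')\ge\mu(E)$ one computes $\mu(P)\le \tfrac{s}{r(r-s)}\deg(\mathcal{O}_X(H))\le \tfrac{r-1}{r}\deg(\mathcal{O}_X(H))$, the last inequality being equivalent to $s\le r-1$. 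Thus $P$ is a torsion-free sheaf of rank $<r$ and slope at most $\tfrac{r-1}{r}\deg(\mathcal{O}_X(H))$ through which the surjection factors, which is exactly what the hypothesis forbids. This contradiction shows that no destabilizing $E'$ exists, so $E$ is $\mu$-stable.
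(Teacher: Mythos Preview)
Your proof is correct and follows essentially the same route as the paper's: take a saturated proper subsheaf $E'\subset E$, pass to its saturation (your $A$, the paper's $F'$) in $\mathcal{O}_X^{\oplus r}$, observe that $A/E'$ injects into $\mathcal{O}_H(N)$, and split into the two cases $A\ne E'$ and $A=E'$, using the geometry of $H$ in the first and the non-factoring hypothesis in the second. The only difference is organizational: you run both cases by contradiction from an assumed $\mu(E')\ge\mu(E)$, whereas the paper proves $\mu(E')<\mu(E)$ directly in each case; the underlying inequalities are identical after rearrangement.
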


\begin{proof} Let $0 \neq E' \subset E$ be a saturated proper subsheaf and set $F'$ equal to the saturation of $E'$ in $\mathcal{O}_X^{\oplus r}$. Since $E'$ is saturated in $E$, it follows that the natural map $j: F'/E' \to \mathcal{O}_H(N)$ is injective. First suppose that $F'/E'$ is nonzero. It follows that the injection $j$ induces an isomorphism $(F'/E')_{\eta} \cong \mathcal{O}_H(N)_{\eta}$ at the (unique) generic point $\eta$ of $H$ in $X$. Indeed, $\mathcal{O}_H(N)$ is a line bundle on an irreducible and generically smooth scheme so $\mathcal{O}_{H}(N)_{\eta} \cong \mathcal{O}_{H, \eta}=k(H)$ for a field $k(H)$. Moreover, since $H$ is Cohen-Maucalay, it has no embedded associated points which implies the localization $(F'/E')_{\eta}$ is a nonzero vector space over $k(H)$ (see, for instance, \cite[Tag 0B3L]{stacks}). It follows that the induced map $(F'/E')_{\eta} \cong \mathcal{O}_H(N)_{\eta}$ is an isomorphism. Therefore we have an exact sequence
\[0 \to F'/E' \to \mathcal{O}_H(N) \to B \to 0\]
where the support of $B$ has codimension $\geq 2$ in $X$, so the determinant of $B$ in $X$ is trivial. From this, it follows that $\text{det}(F'/E') \cong \text{det}(\mathcal{O}_H(N)) \cong \mathcal{O}_X(H)$ and $\text{det}(E') \cong \text{det}(F') \otimes \mathcal{O}_X(-H)$. Therefore:
\[\mu(E')=\frac{\text{deg}(\text{det}(F'))+\text{deg}(\mathcal{O}_X(-H))}{\text{rk}(E')}=\mu(F')-\frac{\text{deg}(\mathcal{O}_X(H))}{\text{rk}(E')} < 0 - \frac{\text{deg}(\mathcal{O}_X(H))}{\text{rk}(E)}=\mu(E)\]
where the second equality follows because $\text{rk}(E')=\text{rk}(F')$, the inequality follows because $\mu(F') \leq \mu(\mathcal{O}_X^{\oplus r}) \leq 0$ and $\text{rk}(E') < \text{rk}(E)$, as desired. 

On the other hand, if $F'/E'=0$, then $F=\mathcal{O}_X^{\oplus r}/E'$ is torsion-free and $\phi:\mathcal{O}_X^{\oplus r} \to \mathcal{O}_H(N)$ factors through the quotient $\mathcal{O}_X^{\oplus r} \to F$. By hypothesis, this implies $\mu(F)>\frac{r-1}{r}\text{deg}(\mathcal{O}_X(H))$ and so
\begin{align*}
\mu(E')=\frac{\text{deg}(\text{det}(E'))}{\text{rk}(E')}=\frac{-\text{deg}(\text{det}(F))}{\text{rk}(E')}
& =
\frac{-(r-\text{rk}(E'))}{\text{rk}(E')}\mu(F) \\
& <
\frac{-(r-\text{rk}(E'))}{\text{rk}(E')}\frac{(r-1)}{r}\text{deg}(\mathcal{O}_X(H)) \\
& \leq
\frac{-\text{deg}(\mathcal{O}_X(H))}{r} \\
& =
\mu(E) \\
\end{align*}
The first inequality follows because $\mu(F)>\frac{r-1}{r}\text{deg}(\mathcal{O}_X(H))$ and the second follows because $E'$ is a saturated \emph{proper} subsheaf of $E$, so $\frac{(r-\text{rk}(E'))(r-1)}{\text{rk}(E')} \geq 1$. It follows that $E$ is $\mu$-stable.\end{proof}

To be able to apply the previous lemma, we will need to find a Cartier divisor $H \subset X$ which is irreducible and generically smooth. This can be arranged by a standard Bertini-type result. However, the statement we require could not be found in the literature and so we include a proof for the sake of completeness.

Recall that if $X$ is a proper $k$-scheme, $L$ a line bundle on $X$ and $N \subset \H^0(X, L)$ a linear system, then one says $N$ \emph{separates tangent vectors on} $U$ if the natural map 
\[\text{Ker}[N \to L_p/\text{m}_pL_p] \to \text{m}_pL_p/\text{m}_p^2L_p\]
is surjective for every $p \in U$. As the term suggests, if $N$ separates tangent vectors on $U \subset X$ and has no base points along $U$, then the associated map $f: U \to \mathbf{P}^n$ induces injective differentials
\[\text{d}f_p: T_{U/k,p} \hookrightarrow T_{\mathbf{P}^n/k,f(p)}\]
and so by \cite[Tag 0B2G]{stacks}, the map $f$ is unramified.

\begin{lemma} \label{bertinitangent} Let $(X, \mathcal{O}_X(1))$ be an integral, smooth, and projective $k$-scheme and fix proper closed subschemes $D \subset C \subset X$ with $\text{codim}_X D \geq 2$ and $C$ a $1$-dimensional subscheme. If a linear system $N \subset \H^0(X,L)$ is base point free on $X \backslash D$ and separates tangent vectors on $X \backslash C$, then the vanishing of the general member $s \in N$ is an irreducible Cartier divisor which is smooth away from $C$. \end{lemma}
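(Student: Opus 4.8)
The plan is to translate the two hypotheses into the geometry of the morphism attached to $N$ and then run two independent dimension counts, one controlling the singular locus and one controlling components, before intersecting the resulting dense open conditions on $s$.

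First I would record the global setup. Base point freeness of $N$ on $X \setminus D$ produces a morphism $f \colon X \setminus D \to \mathbf{P}^m$ with $m = \dim_k N - 1$, under which each nonzero $s \in N$ corresponds to a hyperplane $H_s$ with $V(s) \cap (X \setminus D) = f^{-1}(H_s)$. Since $X$ is integral a general $s$ is nonzero and $V(s)$ is an effective Cartier divisor; as $\dim D \le n-2$ while every component of $V(s)$ has dimension $n-1$ (here $n = \dim X$), no component of $V(s)$ can lie in $D$. Hence $V(s)$ is irreducible if and only if $f^{-1}(H_s)$ is, and smoothness of $V(s)$ at points of $X \setminus C \subseteq X \setminus D$ may be tested on $f^{-1}(H_s)$. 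Note also that $n \ge 2$, since a proper closed subscheme of an integral curve is finite, so the existence of a $1$-dimensional $C \subsetneq X$ forces $\dim X \ge 2$.

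For smoothness I would form the incidence variety $Z \subseteq (X \setminus C) \times \mathbf{P}(N)$ of pairs $(x,[s])$ with $x \in V(s)$ at which $V(s)$ is singular; since $X$ is smooth along $X \setminus C$, this is the locus where $s$ and its first-order part both vanish at $x$. For fixed $x$, base point freeness makes the evaluation $N \to L_x/\mathfrak{m}_x L_x$ surjective, with kernel of codimension $1$, and the tangent-separation hypothesis makes this kernel surject onto $\mathfrak{m}_x L_x/\mathfrak{m}_x^2 L_x$, a space of dimension $n$. Thus the sections singular at $x$ form a subspace of codimension $1+n$ in $N$, so the fibre of $Z \to X \setminus C$ over $x$ has dimension $m-n-1$ (and is empty when this is negative). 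Consequently $\dim Z \le n + (m-n-1) = m-1$, so the image of $Z$ in $\mathbf{P}(N)$ is a proper closed subset, and its complement is a dense open $U_{\mathrm{sm}}$ over which $V(s)$ is smooth away from $C$.

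For irreducibility I would exploit the unramifiedness recorded before the lemma: on $X \setminus C$ the map $f$ is unramified, hence quasi-finite, so its image $Y = \overline{f(X \setminus D)}$ has $\dim Y = n \ge 2$. With $X \setminus D$ irreducible and image of dimension at least $2$, the classical Bertini irreducibility theorem for morphisms (due to Jouanolou) supplies a dense open $U_{\mathrm{irr}} \subseteq \mathbf{P}(N)$ over which $f^{-1}(H_s)$ is geometrically irreducible. Over $U_{\mathrm{sm}} \cap U_{\mathrm{irr}}$, which is nonempty since $k$ is infinite, the Cartier divisor $V(s)$ is then irreducible and smooth away from $C$, as desired.

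I expect the irreducibility step to be the only genuine obstacle. The smoothness count is essentially forced once tangent separation is read as the codimension-$(1+n)$ statement above, but ruling out a splitting of the general member truly requires $\dim Y \ge 2$ — for $\dim Y \le 1$ the general section is a reducible finite scheme — and this is precisely where unramifiedness is used, to guarantee that $f$ does not contract $X$ onto a curve. If one prefers to avoid the morphism version of Bertini, the same conclusion can be obtained by factoring $f$ through the normalization of $k(Y)$ in $k(X)$ and combining irreducibility of the general hyperplane section of $Y$ with a connectedness argument for the finite part, at the cost of a longer proof.
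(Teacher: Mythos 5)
Your proposal is correct and follows the same overall architecture as the paper's proof: interpret the linear system as a morphism $f$ on $X\setminus D$ that is unramified on $X\setminus C$, apply Bertini-type statements there, and then observe that the vanishing locus in all of $X$ cannot pick up extra components inside $D$ because a Cartier divisor has no components of codimension $\geq 2$. The irreducibility half is literally the paper's argument: both invoke Jouanolou's Bertini irreducibility theorem for morphisms (Corollaire 6.11.3), justified by the fact that unramifiedness forces the image to have dimension $\geq 2$. The one genuine difference is the smoothness half: the paper simply cites Jouanolou's Corollaire 6.11.2 for unramified morphisms from a smooth source, whereas you prove it by hand with the incidence correspondence $Z\subset (X\setminus C)\times \mathbf{P}(N)$ and the codimension-$(n+1)$ count coming from base point freeness plus tangent separation. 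Your version is self-contained and manifestly characteristic-free, at the cost of a little care: since $X\setminus C$ is not proper, the image of $Z$ in $\mathbf{P}(N)$ is only constructible, so one should take the complement of its closure (which still has dimension $\leq m-1$); this is a wording fix, not a gap.
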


\begin{proof} Since $N$ is base point free away from $D$ and separates tangent vectors away from $C$, a choice of basis of $N$ defines a morphism $f: X \backslash D \to \mathbf{P}^n$ which is unramified over $X \backslash C$. Since $X \backslash D$ is geometrically integral, it follows that the general hyperplane section in $X \backslash D$ is geometrically irreducible by \cite[Corollaire 6.11.3]{jBertini}. Indeed, the fact that $X \backslash C \to \mathbf{P}^n$ is unramified implies $\text{Im}(f|_{X\backslash C}) \geq 2$ and so we may apply the cited Corollary. Similarly, since $X\backslash C$ is smooth and $N$ induces an unramified morphism on $X\backslash C$, the general hyperplane section in $X \backslash C$ is smooth by \cite[Corollaire 6.11.2]{jBertini}. Since the general hyperplane section over $X \backslash D$ (respectively, $X \backslash C$) corresponds to the intersection of the vanishing of the general member of $N$ in $X$ with $X \backslash D$ (respectively, $X \backslash C$), viewing the vanishing of the general member in $X$ yields the result. The only thing to check is that the vanishing of such a general section remains irreducible, but the only components the vanishing over $X$ could gain are those which lie entirely in $D$. This is not possible because a Cartier divisor cannot have components of codimension $\geq 2$.\end{proof}

Recall that the dual of any globally generated vector bundle on a curve $C$ can be written as an elementary transformation of the trivial vector bundle along a Cartier divisor $D \subset C$. In the proof of Theorem \ref{extendcurves}, we will show that one may extend this elementary transformation to one over all of $X$. As such, we need to show that the Cartier divisor $D \subset C$ must be the intersection of a Cartier divisor, $H$, on $X$, with $C$. This is the primary purpose of the following lemma.

\begin{lemma} \label{E-ample} Let $C \subset X$ be the inclusion of a proper $1$-dimensional closed subscheme in a projective $k$-scheme $X$. Suppose that $E$ is a rank $r \geq 2$ vector bundle on $C$ whose determinant extends to $X$, then there is an ample line bundle $L$ on $X$ with the following properties 
\begin{enumerate}
\item $E \otimes L|_C$ is globally generated, 
\item $\mathrm{det}(E \otimes L|_C)$ is ample,
\item if $\mathrm{det}(E \otimes L|_C) \cong \mathcal{O}_C(D)$ for some effective Cartier divisor $D \subset C$ not containing any associated point of $X$, then there is an ample Cartier divisor $H \subset X$ with $H \cap C=D$ scheme-theoretically, and
\item if $X$ is integral, smooth and projective we may also take $H$ in (3) to be irreducible and generically smooth. 
\end{enumerate}
\end{lemma}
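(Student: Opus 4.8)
The plan is to take $L$ to be a sufficiently high power of $\mathcal{O}_X(1)$ and to exploit the observation that, if $M$ is a line bundle on $X$ extending $\det(E)$, then for \emph{any} $L$ the line bundle $\det(E \otimes L|_C) = \det(E) \otimes (L|_C)^{\otimes r} = (M \otimes L^{\otimes r})|_C$ is itself the restriction of the line bundle $\mathcal{N} := M \otimes L^{\otimes r}$ on $X$. Writing $\iota : C \hookrightarrow X$, I would fix $m \gg 0$ and set $L = \mathcal{O}_X(m)$, $\mathcal{N} = M \otimes \mathcal{O}_X(mr)$, with $m$ chosen large enough that simultaneously: (a) $E \otimes \mathcal{O}_C(m)$ is globally generated (Serre's theorem applied to $\iota_*E \otimes \mathcal{O}_X(m) = \iota_*(E \otimes \mathcal{O}_C(m))$, whose generating sections restrict to generators on $C$); (b) $\mathcal{N}$ is ample and $\H^1(X, \mathcal{N} \otimes \mathcal{I}_C) = 0$, so that restriction $\H^0(X, \mathcal{N}) \to \H^0(C, \mathcal{N}|_C)$ is surjective; and (c) $\mathcal{N} \otimes \mathcal{I}_C$ is globally generated and separates tangent vectors on $X \setminus C$. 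Then (a) gives (1), while (2) is immediate since $\det(E \otimes L|_C) = \mathcal{N}|_C$ is the restriction of an ample bundle.

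For (3), given an isomorphism $\mathcal{N}|_C \cong \mathcal{O}_C(D)$, let $s_D \in \H^0(C, \mathcal{N}|_C)$ cut out $D$; by (b) it lifts to some $s_0 \in \H^0(X, \mathcal{N})$, and I would work inside the affine system $W = s_0 + \H^0(X, \mathcal{N} \otimes \mathcal{I}_C)$, every member of which restricts to $s_D$ on $C$. Any $s \in W$ gives a divisor $H = Z(s)$ with $H \cap C = Z(s|_C) = Z(s_D) = D$ scheme-theoretically, so it remains only to ensure $s$ is a regular section, i.e. that it vanishes at no associated point of $X$. Associated points lying on $C$ are handled for free: there $s|_C = s_D$ is nonvanishing since $D$ avoids them by hypothesis. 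For the finitely many associated points off $C$, global generation of $\mathcal{N} \otimes \mathcal{I}_C$ from (c) supplies, for each, an element of $\H^0(X, \mathcal{N} \otimes \mathcal{I}_C)$ not vanishing there, so prime avoidance over the infinite field $k$ yields a single $s \in W$ avoiding all of them. Then $H = Z(s)$ is an ample effective Cartier divisor (ample because $\mathcal{O}_X(H) \cong \mathcal{N}$) with $H \cap C = D$.

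For (4) I would run the same construction but invoke Lemma \ref{bertinitangent} in place of bare prime avoidance. Consider the linear system $\Lambda = \mathbf{P}(k s_0 + \H^0(X, \mathcal{N} \otimes \mathcal{I}_C)) \subset |\mathcal{N}|$. Since $\Lambda$ is base-point-free off $C$ by (c), and inside $C$ its members all restrict to multiples of $s_D$, its base locus is exactly $D$, which has codimension $n = \dim X \geq 2$ (the case $n = 1$ forces $C = X$ and the statement is trivial); moreover $\Lambda \supseteq \H^0(X, \mathcal{N} \otimes \mathcal{I}_C)$ separates tangent vectors on $X \setminus C$ by (c). Applying Lemma \ref{bertinitangent} with the codimension-$\geq 2$ subscheme $D$ and the curve $C$, the general member $Z(s)$ is irreducible and smooth away from $C$; as $Z(s)$ has dimension $n-1 \geq 1$ while $Z(s) \cap C = D$ is finite, its singular locus lies in $D$ and $Z(s)$ is generically smooth. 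A general member of $\Lambda$ has nonzero $s_0$-component, hence still restricts to $s_D$ and satisfies $Z(s) \cap C = D$; integrality of $X$ makes $Z(s)$ automatically a Cartier divisor, and ampleness again follows from $\mathcal{O}_X(Z(s)) \cong \mathcal{N}$.

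The main obstacle is the tension in (3)–(4) between pinning down the trace on $C$ (forcing $s|_C = s_D$, hence $H \cap C = D$) and retaining enough freedom to make $H$ a Cartier divisor that is, in addition, irreducible and generically smooth. This is resolved by the observation that $\det(E \otimes L|_C)$ extends to the ample bundle $\mathcal{N}$, which confines the search to the affine system $W$ (equivalently $\Lambda$), together with the fact that the residual freedom $\H^0(X, \mathcal{N} \otimes \mathcal{I}_C)$ is positive enough away from $C$ — global generation for regularity and prime avoidance, and tangent-vector separation for the Bertini input — all of which follow from Serre-type vanishing once $m$ is large.
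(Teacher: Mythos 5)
Your proposal is correct and follows essentially the same route as the paper: for part (4) both arguments lift the section cutting out $D$ to a section $s_0$ of the extension $\mathcal{N}=M\otimes L^{\otimes r}$ of $\det(E\otimes L|_C)$, form the linear system spanned by $s_0$ together with $\H^0(X,\mathcal{N}\otimes I_C)$, and invoke Lemma \ref{bertinitangent} to make the general member irreducible and smooth away from $C$ while still restricting to (a scalar multiple of) $s_D$ on $C$. The only difference is that the paper outsources (1)--(3) to a citation, whereas you prove them directly -- including the prime-avoidance check at associated points of $X$ needed to make $Z(s)$ an honest effective Cartier divisor -- which is a welcome addition rather than a deviation.
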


\begin{proof} The existence of $L$ satisfying the conditions (1)-(3) follows from \cite[Lemma 28]{Experiments}. Assume $X$ is integral and smooth, we will show that $L$ can be chosen so that (4) is satisfied. Let $L'$ denote a line bundle on $X$ with $L'|_C \cong \text{det}(E)$. Begin by choosing an ample line bundle $L$ on $X$ so that
\begin{enumerate} 
\item $E \otimes L|_C$ is globally generated, 
\item $L' \otimes L^{\otimes r}$ is very ample,
\item $\text{H}^1(X,L' \otimes L^{\otimes r} \otimes I_C)=0$, and
\item The linear system $M=\H^0(X,L' \otimes L^{\otimes r} \otimes I_C) \subset \H^0(X, L' \otimes L^{\otimes r})$ has no base points away from $C$ and separates tangent vectors on $X \backslash C$.
\end{enumerate}
It is well-known that the first three conditions can be arranged by choosing $L$ to be a high power of an ample line bundle. For the fourth, just note that we can arrange $L$ to be ample enough so that $L' \otimes L \otimes I_C$ is globally generated and $L^{\otimes r-1}$ is very ample (so, in particular, the associated complete linear system separates tangent vectors on $X$). Then the linear system $\H^0(X,L' \otimes L^{\otimes r} \otimes I_C)$  has no base points on $X \backslash C$ and separates tangent vectors on $X \backslash C$. Note that $\text{det}(E \otimes L|_C) \cong L' \otimes L^{\otimes r}|_C$ and therefore the first two items in the statement of the lemma are now satisfied.

Now suppose $\mathrm{det}(E \otimes L|_C) \cong  \mathcal{O}_C(D)$ for some effective Cartier divisor $D \subset C$. Then take cohomology of the exact sequence
\[0 \to L' \otimes L^{\otimes r} \otimes I_C \to L' \otimes L^{\otimes r} \to (L' \otimes L^{\otimes r})|_C \to 0\]
and use the fact that $\H^1(X,L' \otimes L^{\otimes r} \otimes I_C)=0$ to see that the second map induces a surjection on global sections. In particular, a section $s_D \in \H^0(C,L' \otimes L^{\otimes r}|_C)$ defining $D \subset C$ can be lifted to a section $s_{H'} \in \H^0(X,L' \otimes L^{\otimes r})$ whose vanishing, $H' \subset X$ intersects $C$ precisely at $D$. However, it is not clear that $H'$ is irreducible or generically smooth. To remedy this, note that the general member of the linear system
\[N=\langle s_{H'} \rangle +\H^0(X,L' \otimes L^{\otimes r} \otimes I_C) \subset \H^0(X, L' \otimes L^{\otimes r})\]
is smooth away from $C$ and irreducible by lemma \ref{bertinitangent}. Indeed, the linear system $N$ contains $\H^0(X, L' \otimes L^{\otimes r} \otimes I_C)$ and $s_{H'}$ and therefore is base point free away from $D$ and separates tangent vectors away from $C$. Lastly, observe that for the general $s \in N$, $s|_C$ and $s_D$ cut out the same Cartier divisor since it has the form $s=\lambda s_{H'}+s'$ where $s'|_C=0$ and $\lambda \neq 0$.
Thus a general $s \in N$ defines an effective Cartier divisor $H$ on $X$ which is irreducible, generically smooth and has $H \cap C=D$ scheme-theoretically. \end{proof}

We are almost ready to construct an elementary transformation of $X$ along $H$. However, to guarantee $\mu$-stability, we need to show that the hypothesis of lemma \ref{stable} is satisfied. The next lemma shows that there are enough points in the space of maps $\text{Hom}(\mathcal{O}_X^{\oplus r}, \mathcal{O}_H(N))$ which satisfy the desired properties.

Before we state the following lemma, we introduce some basic notation. If $F$ is a coherent sheaf on a scheme $Y$, then define
\[\mathbb{A}(F)=\underline{\text{Spec}}_{\mathcal{O}_Y}(\text{Sym}^*F^{\vee})\]
to be the associated linear scheme. However, if $Y=\Spec A$ and $V=\tilde{M}$ for an $A$-module $M$, then we will write $\mathbb{A}(M)$ instead.


\begin{lemma} \label{lem:avoidingtorsionfrees} Let $(X,\mathcal{O}_X(1))$ denote an integral $k$-projective variety and let $H$ be an ample Cartier divisor with a finite subscheme $D \subset H$. Fix integers $r$ and $\rho$, then there exists an $N_0$ such that for all $N \geq N_0$ and any $\psi \in \mathrm{Hom}(\mathcal{O}_X^{\oplus r},\mathcal{O}_H(N))$ there is a nonempty open subscheme
\[U_N \subset \psi + \mathbb{A}(\mathrm{Hom}(\mathcal{O}_X^{\oplus r}, \mathcal{O}_H(N) \otimes I_D )) \subset \mathbb{A}(\mathrm{Hom}(\mathcal{O}_X^{\oplus r},\mathcal{O}_H(N)))\]
such that all points of $U_N$ correspond to maps $\phi: \mathcal{O}_X^{\oplus r} \to \mathcal{O}_H(N)$ which do not factor through a torsion-free quotient $F$ of $\mathcal{O}_X^{\oplus r}$ with $\mu(F) \leq \rho$ and $\text{rk}(F)<r$.
\end{lemma}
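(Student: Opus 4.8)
The plan is to prove the statement by a dimension count inside the affine translate $T_{\psi}:=\psi+\mathbb{A}(\mathrm{Hom}(\mathcal{O}_X^{\oplus r},\mathcal{O}_H(N)\otimes I_D))$. I will exhibit the set of ``bad'' maps---those $\phi$ factoring through some torsion-free quotient $F$ of $\mathcal{O}_X^{\oplus r}$ with $\mathrm{rk}(F)<r$ and $\mu(F)\le\rho$---as a constructible subset $B_N$ of $\mathbb{A}(\mathrm{Hom}(\mathcal{O}_X^{\oplus r},\mathcal{O}_H(N)))$ of controlled dimension, and show that for $N\gg0$ one has $\dim(B_N\cap T_\psi)<\dim T_\psi$. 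Since $T_\psi$ is an irreducible affine space and $B_N\cap T_\psi$ is constructible (Chevalley), the open set $U_N:=T_\psi\setminus\overline{B_N\cap T_\psi}$ is then nonempty, and every point of it is a map factoring through no such $F$.

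First I would bound the family of bad quotients. The trivial bundle $\mathcal{O}_X^{\oplus r}$ is $\mu$-semistable of slope $0$: any subsheaf $K$ has determinant mapping nontrivially to $\mathcal{O}_X$, so $\deg(K)\le0$. Hence every torsion-free quotient $F$ satisfies $\mu_{\min}(F)\ge0$ and $\deg(F)\ge0$; if $\rho<0$ there are no bad quotients and the statement is vacuous, while if $\rho\ge0$ then $\mu(F)\le\rho$ and $\mathrm{rk}(F)\le r-1$ force $\mu_{\max}(F)\le\deg(F)\le\rho(r-1)$ (all Harder--Narasimhan slopes being non-negative). Grothendieck's boundedness lemma then produces a projective finite-type scheme $Q$---a finite union of Quot schemes of $\mathcal{O}_X^{\oplus r}$---parametrizing all bad quotients, with $\dim Q$ independent of $N$. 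Over $Q$ the universal quotient together with the relative homomorphisms into $\mathcal{O}_H(N)$ (which for $N\gg0$ form a vector bundle, by cohomology and base change over the bounded family $Q$) give a finite-type scheme $\mathbb{V}$ of pairs $(q\colon\mathcal{O}_X^{\oplus r}\to F,\,g\colon F\to\mathcal{O}_H(N))$ with $q$ surjective; the evaluation $(q,g)\mapsto g\circ q$ has image $B_N$, whence $\dim B_N\le\dim Q+\max_F\dim_k\mathrm{Hom}(F,\mathcal{O}_H(N))$.

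The heart of the matter is comparing leading terms in $N$. Let $a_H>0$ be the leading coefficient of $P_H(N):=\dim_k\H^0(H,\mathcal{O}_H(N))$; since $D$ is finite, $\dim T_\psi=r\cdot\dim_k\H^0(H,\mathcal{O}_H(N)\otimes I_D)=r\,a_H N^{n-1}+O(N^{n-2})$. For a bad $F$, adjunction gives $\mathrm{Hom}(F,\mathcal{O}_H(N))=\mathrm{Hom}_H(i^*F,\mathcal{O}_H(N))$ for the inclusion $i\colon H\hookrightarrow X$, and restricting $q$ to $H$ presents $i^*F$ as a quotient of $\mathcal{O}_H^{\oplus r}$ with kernel $\bar K$. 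Dualizing yields an exact sequence $0\to\mathcal{H}om_H(i^*F,\mathcal{O}_H)\to\mathcal{O}_H^{\oplus r}\to\mathcal{Q}\to0$ on $H$, where $\mathcal{Q}$ is the image of $\mathcal{O}_H^{\oplus r}$ in $\mathcal{H}om_H(\bar K,\mathcal{O}_H)$; twisting by $\mathcal{O}_H(N)$ and taking global sections (with $\H^1$ of the subsheaf vanishing for $N\gg0$, uniformly over $Q$) gives
\[\dim_k\mathrm{Hom}(F,\mathcal{O}_H(N))=r\,P_H(N)-\dim_k\H^0(H,\mathcal{Q}(N)).\]
Because $\mathrm{rk}(F)<r$ forces $\bar K\neq0$ at every generic point $\eta$ of $H$, the coordinate functionals on $\mathcal{O}_H^{\oplus r}$ do not all vanish on $\bar K_\eta$, so $\mathcal{Q}$ has length $\ge1$ at each such $\eta$. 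Consequently $\dim_k\H^0(H,\mathcal{Q}(N))\ge\varepsilon N^{n-1}+O(N^{n-2})$ with $\varepsilon:=\tfrac{1}{(n-1)!}\sum_\eta\deg_{\mathcal{O}_X(1)}(\overline{\{\eta\}})>0$ depending only on $H$, and hence $\dim_k\mathrm{Hom}(F,\mathcal{O}_H(N))\le(r\,a_H-\varepsilon)N^{n-1}+O(N^{n-2})$ uniformly over $Q$.

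I expect this leading-coefficient comparison to be the main obstacle. The naive expectation that the fibers grow like $\mathrm{rk}(F)\cdot a_H N^{n-1}$ can fail, since $i^*F$ may have generic rank exceeding $\mathrm{rk}(F)$ along components of $H$ (e.g.\ where $X$ or $H$ is singular); the key point is that one does not need the precise generic rank, only the uniform deficit forced by $\bar K\neq0$, which robustly produces the strictly smaller coefficient $r\,a_H-\varepsilon$. Combining the two estimates,
\[\dim T_\psi-\dim B_N\ \ge\ \varepsilon N^{n-1}-\dim Q-O(N^{n-2}),\]
which is positive for all $N$ larger than some $N_0$ depending only on $H$, $r$, $\rho$ and the boundedness constants---in particular independent of $\psi$. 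For such $N$ the required open set $U_N$ exists by the reasoning of the first paragraph, completing the proof. The remaining inputs are routine: uniform Serre vanishing over $Q$ (to replace the relevant cohomology groups by Euler characteristics and to compute $\dim\mathbb{V}$) and Chevalley's theorem.
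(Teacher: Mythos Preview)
Your proposal is correct and follows essentially the same strategy as the paper: bound the family of bad quotients by a finite-type Quot scheme $Q$, build the incidence scheme of pairs (bad quotient, map to $\mathcal{O}_H(N)$), and show its dimension is eventually smaller than that of the coset by comparing Hilbert polynomials on $H$. The only notable difference is that you make the deficit estimate explicit---your saturation argument showing $\bar K_\eta\neq 0$ at each generic point of $H$ yields a uniform lower bound $\varepsilon N^{n-1}$ on the Hilbert polynomial of the cokernel, whereas the paper argues more abstractly that these polynomials are nonconstant and form a finite set (via generic flatness and Noetherian induction on $Q$), which suffices for the same conclusion.
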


\begin{proof} Consider the family $\mathcal{F}$ of all torsion-free quotients of $\mathcal{O}_X^{\oplus r}$ with $\mu(F) \leq \rho$ and $\text{rk}(F)<r$. By \cite[Lemma 1.7.9]{huybrechts2010geometry}, there is a finite type Quot-scheme $Q$ of $\mathcal{O}_X^{\oplus r}$ so that each $F \in \mathcal{F}$ appears as a fiber of the universal quotient on $X \times Q$:
\[u: \mathcal{O}_{X \times Q}^{\oplus r} \to F_{\text{univ}}.\]
Since a Quot-scheme parametrizes flat families of quotients, the universal object $F_{\text{univ}}$ is $Q$-flat. Now, throw out all the connected components of $Q$ which do not contain a quotient belonging to $\mathcal{F}$. Since $F_{\text{univ}}$ and $\text{p}_1^*\mathcal{O}_H(N)$ are flat over $Q$ for every $N \geq 0$ (here $\text{p}_1: X \times Q \to X$ is the projection), all the fibers $(F_{\text{univ}})_q$ have rank less than $r$ and \cite[Corollaire 7.7.8]{EGAIII.2} implies the functors $G_N,H_N: \underline{\text{Aff}}/Q \to \underline{\text{Set}}$ defined by
\[G_N(T)=\text{Hom}_{\mathcal{O}_{X_T}}(F_{\text{univ}}|_T,\text{p}_1^*\mathcal{O}_H(N)|_T)\]
\[H_N(T)=\text{Hom}_{\mathcal{O}_{X_T}}(\text{p}_1^*\mathcal{O}_{X}^{\oplus r}|_T, \text{p}_1^*\mathcal{O}_H(N)|_T)\]
for any affine $Q$-scheme $T$, are representable by linear schemes. Let $Y_N$ denote the linear $Q$-scheme representing $G_N$ and note that, by cohomology and base change, for all sufficiently large $N$, $H_N$ is naturally represented by the geometric vector bundle $\mathbb{A}(\text{Hom}_{\mathcal{O}_{X}}(\mathcal{O}_{X}^{\oplus r}, \mathcal{O}_H(N))) \times_k Q$. Thus, the quotient $u$ induces a monomorphism
\[g: Y_N \hookrightarrow \mathbb{A}(\text{Hom}_{\mathcal{O}_{X}}(\mathcal{O}_{X}^{\oplus r}, \mathcal{O}_H(N))) \times_k Q\]
which can be interpreted as follows: at a closed point $q \in Q$, the morphism $g$ is the inclusion of homomorphisms $\psi:\mathcal{O}_X^{\oplus r} \to \mathcal{O}_H(N)$ which factor through $(F_{\text{univ}})_q$. Thus, to prove the lemma it suffices to show there is a $N_0$ so that for every $N \geq N_0$ 
\[\text{dim} Y_N<\text{dim}(\mathbb{A}(\text{Hom}(\mathcal{O}_X^{\oplus r},\mathcal{O}_H(N) \otimes I_D))).\]
Indeed, in that case the composition $\pi_1 \circ g$:
\[Y_N \hookrightarrow \mathbb{A}(\text{Hom}(\mathcal{O}_X^{\oplus r}, \mathcal{O}_H(N))) \times_k Q \to  \mathbb{A}(\text{Hom}(\mathcal{O}_X^{\oplus r}, \mathcal{O}_H(N)))\] 
cannot have an image whose closure contains a coset of $\mathbb{A}(\text{Hom}(\mathcal{O}_X^{\oplus r}, \mathcal{O}_H(N) \otimes I_D))$ (here $\pi_1$ denotes the projection onto the first factor). Thus, the complement of the closure of the image of $\pi_1 \circ g$ can be intersected with any such coset to define the nonempty open:
\[U_N \subset \psi + \mathbb{A}(\text{Hom}(\mathcal{O}_X^{\oplus r}, \mathcal{O}_H(N) \otimes I_D)) \subset \mathbb{A}(\text{Hom}(\mathcal{O}_X^{\oplus r},\mathcal{O}_H(N)))\]
so all $\phi \in U_N$ do not factor through any $F \in \mathcal{F}$.

For all $N \gg 0$:
\[\text{dim} Y_N \leq \text{dim}(Q)+\text{max}_{q \in Q}\{\chi(\underline{\text{Hom}}((F_{\text{univ}})_q,(\text{p}_1^*\mathcal{O}_H(N))_q))\}\]
because there is a canonical identification of fibers
\[(Y_N)_q=\mathbb{A}( \text{Hom}((F_{\text{univ}})_q,(\text{p}_1^*\mathcal{O}_H(N))_q))\] 
for any point $q \in Q$.
Moreover, for every $q \in Q$ and all $N \geq 0$, we have:
\[\chi(\mathcal{O}_H^{\oplus r}(N))=\chi(\underline{\text{Hom}}((F_{\text{univ}})_q,(\text{p}_1^*\mathcal{O}_H(N))_q))+p_q(N)\]
for some nonconstant polynomial $p_q(t)$ since $(F_{\text{univ}})_q$ has rank $< r$. Indeed, $p_q(t)$ is the Hilbert polynomial of the cokernel of 
\[0 \to \underline{\text{Hom}}((F_{\text{univ}})_q,(\text{p}_1^*\mathcal{O}_H)_q)) \to \underline{\text{Hom}}((\mathcal{O}_{X \times Q}^{\oplus r})_q, (\text{p}_1^*\mathcal{O}_H)_q)\]
 and because this cokernel is supported on the positive-dimensional projective scheme $H$, $p_q(t) \to \infty$ as $t$ gets large. Next, we claim that the set $\{p_q(t)\}_{q \in Q}$ is finite. Indeed, this is a consequence of the finiteness of the Hilbert polynomials associated to the sheaves $\{\underline{\text{Hom}}((F_{\text{univ}})_q,(\text{p}_1^*\mathcal{O}_H)_q)\}_{q \in Q}$.
This follows by Noetherian induction on $Q$, generic $Q$-flatness of $\underline{\text{Hom}}(F_{\text{univ}},\text{p}_1^*\mathcal{O}_H)$ (see \cite[Tag 052A]{stacks}) and \cite[Lemma 6.8]{ArtinI}. 
Lastly, since $D$ is a finite subscheme of $H$, there is a fixed constant $d>0$ with
\[\chi(\mathcal{O}_H^{\oplus r}(N) \otimes I_D)+d=\chi(\mathcal{O}_H^{\oplus r}(N) )\]
for all large $N$. Thus, by the finiteness of the set $\{p_q(t)\}_{q \in Q}$, there is a $N_0 \gg 0$ so that 
\begin{equation*}
\begin{split}
\text{dim}Y_N & \leq \text{dim}(Q)+\text{max}_{q \in Q}\{\chi(\underline{\text{Hom}}((F_{\text{univ}})_q,(\text{p}_1^*\mathcal{O}_H(N))_q ))\} \\
& \leq 
\text{dim}(Q)+\chi(\mathcal{O}_H(N)^{\oplus r})-\text{min}_{q \in Q}\{p_q(N)\} \\
& =
\text{dim}(Q)+\chi(\mathcal{O}_H(N)^{\oplus r} \otimes I_D)+d-\text{min}_{q \in Q}\{p_q(N)\} \\
& <
\text{dim}(\mathbb{A}(\text{Hom}(\mathcal{O}_X^{\oplus r},\mathcal{O}_H(N) \otimes I_D))) \\
\end{split}
\end{equation*}
for all $N \geq N_0$, as desired. \end{proof}

\section{Proof of the main theorem}

\begin{proof}[\textbf{Proof of Theorem \ref{extendcurves}}] Let $\mathcal{O}_X(1)$ denote an ample line bundle on $X$. By replacing $V$ with $V(m)=V \otimes \mathcal{O}_X(1)^{\otimes m}|_C$ for $m\gg 0$ we may suppose that the conclusion of lemma \ref{E-ample} holds. In particular, $V$ is globally generated. Thus, by lemma \ref{trivial} there exists a Cartier divisor $i: D \to C$ which misses the associated points of $X$ and the associated points of $C$ with the property that there is an exact sequence
\[0 \to V^{\vee} \to \mathcal{O}_C^{\oplus r} \to i_*\mathcal{O}_D \to 0.\]
By adjunction, the surjection on the right, call it $\phi'_D$, is determined by the induced map of sheaves on $D$:
\[\phi_D: \mathcal{O}_C^{\oplus r}|_D=\mathcal{O}^{\oplus r}_D \to \mathcal{O}_D.\]
To prove the theorem it suffices to show $V^{\vee}$ extends to $X$. Let $L$ denote a fixed line bundle on $X$ with $L|_C \cong \text{det}(V)$. Observe that $L|_C \cong \mathcal{O}_C(D) \cong \text{det}(\mathcal{O}_D) $ and that $D \subset C$ is a Cartier divisor in $C$ missing the associated points of $X$. Thus, we may apply the full conclusion of lemma \ref{E-ample}. In particular, there is an effective ample Cartier divisor $H \subset X$ with $H \cap C=D$ scheme-theoretically. Moreover, if $X$ is integral and smooth, we may take $H$ to be smooth away from $C$ and irreducible.

The idea will be to extend the elementary transformation of $\mathcal{O}_C^{\oplus r}$ on $C$ along $D$ to an elementary transformation of $\mathcal{O}_X^{\oplus r}$ on $X$ along $H$. To make this precise, first fix an isomorphism $g_1: \mathcal{O}_X(1)|_D \to \mathcal{O}_D$ and note that this induces isomorphisms $g_N: \mathcal{O}_X(N)|_D \cong \mathcal{O}_D$ for every $N>0$. Our goal is to find a surjective morphism $\psi: \mathcal{O}^{\oplus r}_H \to \mathcal{O}_X(N)|_H$ for some $N>0$ so that the following diagram commutes:

\begin{center} \begin{tikzcd} 
\mathcal{O}^{\oplus r}_H|_D \arrow{d}{\text{id}} \arrow{r}{\psi|_D} & \mathcal{O}_X(N)|_D  \arrow{d}{g_N} \arrow{r} & 0 \\
  \mathcal{O}^{\oplus r}_D \arrow{r}{\phi_D} & \mathcal{O}_D \arrow{r} & 0. \\
\end{tikzcd} \end{center}
Once we have found such a $\psi$, compose it with the natural adjunction morphism to obtain $\psi':\mathcal{O}_X^{\oplus r} \to \mathcal{O}_H^{\oplus r} \to \mathcal{O}_X(N)|_H$. Now consider the associated elementary transformation on $X$:
\[0 \to W \to \mathcal{O}_X^{\oplus r} \to \mathcal{O}_X(N)|_H \to 0\]
and observe that because $H$ is a Cartier divisor, $W$ must be locally free. Moreover, note that $\underline{\text{Tor}}^{\mathcal{O}_X}_1(\mathcal{O}_X(N)|_H, \mathcal{O}_C)=0$ since it injects into the vector bundle $W|_C$ and is supported on $D$ (which contains no associated points of $C$). Thus, upon restriction to $C$, the isomorphism $g_N: \mathcal{O}_X(N)|_D \to \mathcal{O}_D$ induces a morphism of short exact sequences:

\begin{center} \begin{tikzcd} 
0 \arrow{r} & W|_C \arrow{r} \arrow{d}{\cong} & \mathcal{O}_X^{\oplus r}|_C \arrow{d}{\text{id}} \arrow{r}{\psi'|_C} & \mathcal{O}_X(N)|_D \arrow{d}{g_N} \arrow{r} & 0 \\
 0 \arrow{r} & V^{\vee} \arrow{r} & \mathcal{O}_C^{\oplus r} \arrow{r}{\phi'_D} & \mathcal{O}_D \arrow{r} & 0 \\
\end{tikzcd} \end{center}

\noindent thereby producing an isomorphism $W|_C \simeq V^{\vee}$, as desired. Note that the isomorphisms $g_N: \mathcal{O}_X(N)|_D \simeq \mathcal{O}_D$ we have fixed determine isomorphisms 
\[\text{\underline{Hom}}(\mathcal{O}_H^{\oplus r}, \mathcal{O}_X(N)|_H) \otimes \mathcal{O}_D \simeq \text{\underline{Hom}}(\mathcal{O}_D^{\oplus r}, \mathcal{O}_D).\]
As such, the rest of the proof is devoted to finding a $\psi$ which restricts to $\phi_D$ via the isomorphism $g_N$ (for some large $N>0$).

Next, take $N$ to be large enough so that the short exact sequence on $H$
\[0 \to \text{\underline{Hom}}(\mathcal{O}_H^{\oplus r}, \mathcal{O}_X(N)|_H) \otimes I_D \to \text{\underline{Hom}}(\mathcal{O}_H^{\oplus r}, \mathcal{O}_X(N)|_H) \to  \text{\underline{Hom}}(\mathcal{O}_D^{\oplus r}, \mathcal{O}_D) \to 0\]
remains exact after taking global sections so that we may lift $\phi_D \in \H^0(H, \text{\underline{Hom}}(\mathcal{O}_D^{\oplus r}, \mathcal{O}_D))$ to a section $\psi_D \in \H^0(H, \text{\underline{Hom}}(\mathcal{O}_H^{\oplus r}, \mathcal{O}_X(N)|_H))$.
The issue is that $\psi_D: \mathcal{O}_H^{\oplus r} \to \mathcal{O}_X(N)|_H$ may not be surjective away from $D$. We will rectify this by adding a factor from $\H^0(H, \text{\underline{Hom}}(\mathcal{O}_H^{\oplus r}, \mathcal{O}_X(N)|_H) \otimes I_D)$ which doesn't change the behavior of $\psi_D$ along $D$. 

For all sufficiently large $N$, we may fix a basis 
\[\psi_1,...,\psi_n \in \H^0(H, \text{\underline{Hom}}(\mathcal{O}_H^{\oplus r}, \mathcal{O}_X(N)|_H) \otimes I_D)\]
so that at any point $p \in H \backslash D$, there is a collection of $r$ sections among the $\psi_1,...,\psi_n$ which form a basis for the vector space $\text{\underline{Hom}}(\mathcal{O}_H^{\oplus r}, \mathcal{O}_X(N)|_H) \otimes k(p)$. Viewing the sections $\psi_D,\psi_1,...,\psi_n$ in $\H^0(H, \text{\underline{Hom}}(\mathcal{O}_H^{\oplus r}, \mathcal{O}_X(N)|_H))$ we set $\mathbf{A}^n_k=\Spec k[x_1,...,x_n]$ and consider the universal section 
\[\psi_{\text{univ}}=\psi_D+\Sigma_{i=1}^n x_i\psi_i\]
of $\psi_D+\text{\underline{Hom}}(\mathcal{O}_H^{\oplus r}, \mathcal{O}_X(N)|_H) \otimes I_D$ pulled back to $\mathbf{A}^n_k \times_k H$. Thus, by construction, the universal section restricts to the section $\psi_{\underline{a}}=\psi_D+\Sigma_{i=1}^na_i\psi_i$ over $\underline{a}=(a_1,...,a_n) \in \mathbf{A}^n_k(k)$. 

Over the complement $U=H \backslash D \subset H$, consider the closed locus of non-surjective maps
\[Z=\{(\underline{a},u)| \psi_{\underline{a}} \otimes k(u) \text{ is not surjective}\} \subset \mathbf{A}^n_k \times U.\]
For any $u \in U$ the fiber $Z_u$ has codimension $r$ since the $\psi_1,...,\psi_n$ generate $\text{\underline{Hom}}(\mathcal{O}_H^{\oplus r}, \mathcal{O}_X(N)|_H)$ at all $u \in U$. Indeed, there is a surjective map
\[\pi: \mathbf{A}^n_{k(u)} \to \Hom_{k(u)}(k(u)^{r},k(u))=\text{\underline{Hom}}(\mathcal{O}_H^{\oplus r}, \mathcal{O}_X(N)|_H) \otimes k(u)\]
sending
\[\underline{a}=(a_1,...,a_n) \mapsto (\psi_D)_{k(u)}+\Sigma_{i=1}^na_i(\psi_i)_{k(u)}\]
and only the zero map doesn't have full rank. Therefore the fiber $\pi^{-1}(0)=Z_u$ has dimension $n-r$ so the dimension of $Z$ (and the closure of its image $\overline{\text{p}_1(Z)}$ in $\mathbf{A}^n_k$) is at most 
\[n-r+\dim(H)<n\]
because $r=\text{rank}(V) \geq \text{dim}(X)>\text{dim}(H)$. 

Thus, there is a point $\underline{c}=(c_1,...,c_n) \in \mathbf{A}^n_k(k)$ avoiding $\overline{\text{p}_1(Z)}$, and we claim that the corresponding section of $\text{\underline{Hom}}(\mathcal{O}_H^{\oplus r},\mathcal{O}_X(N)|_H)$ works as desired. Indeed, a point avoiding $\overline{\text{p}_1(Z)}$ corresponds to a section
\[\psi_{\underline{c}}=\psi_D+\Sigma_{i=1}^n c_i\psi_i \in \H^0(H,\text{\underline{Hom}}(\mathcal{O}_H^{\oplus r},\mathcal{O}_X(N)|_H))\]
which is a surjective linear map for every $u \in U$ (since $(\underline{c},u)$ is not in $Z$). Moreover, on $D$, we have $\psi_{\underline{c}}|_D=\psi_D|_D=\phi_D$. Also, $\phi_D$ is surjective so Nakayama's lemma implies $\psi_{\underline{c}}$ is surjective over all of $H$. Thus, the kernel of $\psi_{\underline{c}}': \mathcal{O}_X^{\oplus r} \to \mathcal{O}_H^{\oplus r} \to \mathcal{O}_H(N)$ is a vector bundle $W$ on $X$ extending $V^{\vee}$. 

If $X$ is smooth, lemma \ref{lem:avoidingtorsionfrees} says that after perhaps enlarging $N$, there is a nonempty open subset 
\[U_N \subset \psi_D+\text{Hom}(\mathcal{O}_H^{\oplus r}, \mathcal{O}_X(N)|_H) \otimes I_D\]
so that the corresponding composition $\mathcal{O}_X^{\oplus r} \to \mathcal{O}_H^{\oplus r} \to \mathcal{O}_X(N)|_H$ does not factor through a torsion-free sheaf $F$ on $X$ with $\text{rk}(F)<r$ and $\mu(F) \leq \frac{r}{r-1}\text{deg}(\mathcal{O}_X(H))$. Therefore, the general section
\[\underline{c} \in \mathbf{A}^n_k \cong \psi_D+\text{Hom}(\mathcal{O}_H^{\oplus r}, \mathcal{O}_X(N)|_H \otimes I_D)\]
induces a surjective map $\psi_{\underline{c}}':\mathcal{O}_X^{\oplus r} \to \mathcal{O}_H(N)$ (because it misses $\overline{\text{p}_1(Z)}$) and the resulting kernel satisfies the hypothesis of lemma \ref{stable} (because $\underline{c} \in U_N$). It follows that the kernel of $\psi_{\underline{c}}'$ is a $\mu$-stable vector bundle on $X$ extending $V^{\vee}$. \end{proof}


\bibliography{mybib}{}

\begin{thebibliography}{1}

\bibitem{ArtinI}
Michael Artin.
\newblock Algebraization of formal moduli. i.
\newblock {\em Global analysis (papers in honor of K. Kodaira)}, pages 21--71,
  1969.

\bibitem{EGAIII.2}
A.~Grothendieck.
\newblock \'{E}l\'{e}ments de g\'{e}om\'{e}trie alg\'{e}brique. {III}.
  \'{E}tude cohomologique des faisceaux coh\'{e}rents, part 2.
\newblock {\em Inst. Hautes \'{E}tudes Sci. Publ. Math.}, (17):91, 1963.

\bibitem{huybrechts2010geometry}
Daniel Huybrechts and Manfred Lehn.
\newblock {\em The geometry of moduli spaces of sheaves}.
\newblock Cambridge University Press, 2010.

\bibitem{jBertini}
J.P. Jouanolou.
\newblock {\em Th{\'e}or{\`e}mes de Bertini et applications}.
\newblock Progress in mathematics (Birkh{\"a}user) 42. Birkh{\"a}user, 1983.

\bibitem{maruyama1982elementary}
Masaki Maruyama.
\newblock Elementary transformations in the theory of algebraic vector bundles.
\newblock In {\em Algebraic geometry ({L}a {R}\'{a}bida, 1981)}, volume 961 of
  {\em Lecture Notes in Math.}, pages 241--266. Springer, Berlin, 1982.

\bibitem{maruyama1978moduli}
Masaki Maruyama et~al.
\newblock Moduli of stable sheaves, ii.
\newblock {\em Journal of Mathematics of Kyoto University}, 18(3):557--614,
  1978.

\bibitem{Experiments}
Siddharth {Mathur}.
\newblock {Experiments on the Brauer map in High Codimension}.
\newblock page arXiv:2002.12205, February 2020.

\bibitem{stacks}
The {Stacks Project Authors}.
\newblock \emph{{S}tacks {P}roject}.
\newblock http://stacks.math.columbia.edu, 2020.

\end{thebibliography}
\bibliographystyle{plain}

\end{document}